\documentclass{article}
\usepackage{algorithm}
\usepackage{amsmath,amsthm}
\usepackage{algpseudocode}
\usepackage{threeparttable}
\usepackage{paralist}
\usepackage{graphics} 
\usepackage{epsfig} 
\usepackage{graphicx}  
\usepackage{epstopdf}
\usepackage[colorlinks=true]{hyperref}
\hypersetup{urlcolor=blue, citecolor=red}

\newtheorem{theorem}{Theorem}[section]

\newtheorem{proposition}{Proposition}

\theoremstyle{definition}

\newtheorem{remark}{Remark}

\begin{document}
\title{A non-iterative algorithm for \\generalized Pig games}

\author{Fabi\'an Crocce\footnote{Centro de Matem\'atica, Facultad de Ciencias, Universidad de la Rep\'ublica,
Igu\'a 4225, 111400, Montevideo, Uruguay,
(fcrocce@gmail.com).}   
\quad
and\quad Ernesto Mordecki\footnote{Centro de Matem\'atica, Facultad de Ciencias, Universidad de la Rep\'ublica,
Igu\'a 4225, 111400, Montevideo, Uruguay
(mordecki@cmat.edu.uy).} }

\maketitle
\begin{abstract}
We provide a polynomial algorithm to find the value and an optimal strategy for a generalization of the Pig game.
Modeled as a competitive Markov decision process, the corresponding  Bellman equations can be decoupled 
leading to  systems of two non-linear equations with two unknowns. In this way we avoid the classical iterative 
approaches. A simple complexity analysis reveals that the algorithm requires 
$O(\mathbf{s}\log\mathbf{s})$ steps, where  $\mathbf{s}$ is the number of states of the game.
The classical Pig and the Piglet (a simple variant of the Pig played with a coin)
are examined in detail. 
\end{abstract}

\textbf{AMS MSC 2010}: Primary: 91A15 91A60; Secondary: 90C47.

\textbf{Keywords:} Dice games, Simple stochastic games, polynomial algorithm.

\section{Introduction}\label{section:intro}
%

Pig is a popular competitive dice game of the family of jeopardy games. 
It is a turn based game with two players. 
Let us describe first the turn of one player, that can be considered as a solitaire variant of the game.
The player rolls a die. 
If he gets a one (the \emph{pig}) he looses the turn,
ending it with zero points.
If he gets a number different from one (say a 4), he scores this result and faces two options:
(i) to bank his points, ending the turn with his first obtained points (4), or 
(ii) to roll the die again. 
The conditions of this second roll are: if he gets a one he looses the score obtained 
in the first roll (his turn account falls from 4 to 0) and the turn is finished. 
If he gets a number different from one (say a 5) he adds this result to the first one, (increasing to 9) 
and has again to decide whether to roll or to hold. 
If he rolls and gets a one, his account goes to 0 (from 9 to 0)
and if he gets a number different from one (say a 3) this number is added to his score (increasing to 12).
The turn continues under the same conditions, until the player decides to hold, or gets a one.
%
%
This single turn can be  modeled as a stochastic optimization problem, 
as after each roll with a result different from one, the player has to decide whether to roll the die, 
possibly increasing his points
--risking to loose these obtained points-- or to stop rolling, ending the turn with these points as reward. 
This turn, considered as a solitaire game, has an optimal rule: to maximize the expected score, 
the player should roll until the first time he accumulates 20 points or more.
The maximum expected score is $8.1418$ points (Roters \cite{Roters}).


We now describe the full competitive Pig game. 
There are two players, who play by turns as described above. 
There is a general account for each player, where they bank the points obtained 
after deciding to hold (ending the  turn), passing the die to the other player.
(These general accounts do not decrease.)
The first player in scoring $100$ or more points wins the game. 


Piglet is a simple version of Pig played with a coin instead of a die. 
It was introduced by  Neller and Presser \cite{Neller}.
In the Piglet, the winner is the first player to obtain 10 heads. 
During his turn, 
each player repeatedly flips a coin until either a tail is obtained or he decides to hold and score the number of consecutive flipped heads. 
Beside its simplicity, it is interesting to note that this game is a particular case of
a \emph{simple stochastic game} (SSG), as introduced by Condon \cite{c1}. 
The solitaire variant of the piglet game has threshold one and maximum expected reward $1/2$.


The \emph{generalized pig game} (GPG) is an abstract form of the previous games, 
played with a ``special die'' with faces $0,\ldots ,n$ and probabilities $p_0,\ldots,p_n$ $(p_0+\cdots+p_n=1)$ respectively.
The first of two players in scoring $N$ or more points wins the game.
Similar to the Pig and Piglet, if in his turn a player rolls the die and obtains a zero, 
the turn ends without increasing the general score, 
whereas if the turn ends by the player's decision, 
the turn score (sum of the outcomes within the turn) is added to his general score.
The Pig is a GPG with $n=6$, $p_0=1/6$, $p_1=0$, $p_i=1/6\ (i=2,\dots,6)$, and $N=100$. 
In the case of the Piglet the parameters are $n=2$, $p_0=p_1=1/2$, and $N=10$. 


Solitaire variants of the Pig game were studied by Roters \cite{Roters} and Roters and Haigh \cite{Haigh}.
The first solution for the competitive Pig game was obtained by Neller and Presser \cite{Neller}, 
with the help of a value iteration algorithm.
Tijms \cite{Tijms} considers also the competitive pig game and a simultaneous-decision-taking variant, 
named Hog (see also \cite{Tijms1}), 
and Louchard \cite{louchard} examines some optimal strategies for the solitaire and competitive variants of the game. 
In these papers, no complexity analysis of the proposed algorithms is presented.


From a theoretical point of view, 
the GPG is a \emph{zero-sum turn-based Competitive Markov Decision Process},
in the terminology of Filar and Vrieze \cite{Filar}, 
or a \emph{two-player zero-sum turn-based stochastic game} (labelled as 2TBSG following Hansen et al. \cite{hansen}), in the terminology of Shapley \cite{Shapley}.
An important theoretical difference between turn-based games and the classic simultaneously-taken actions ones, 
initiated in the work by von Neumann and Morgenstern \cite{mvn}, 
is that simultaneity usually implies the need of randomized strategies to obtain optimality,
whereas in the framework of turn-based games it is usually possible to find deterministic optimal strategies.


Regarding solutions (i.e. finding the value and an optimal strategy),
Vrieze et al. \cite{vet} provide an algorithm to solve a general 2TBSG.
The finiteness of this algorithm has been established, 
and the \emph{order field property} (the solution of the game belongs to the same algebraic field as the data) 
follows from their analysis (see also Theorem 6.3.3 in \cite{Filar}).


As was mentioned our game can be seen to be a Simple Stochastic Game (SSG), 
and therefore all the theoretical results for this class of games apply.
In particular it is known that, for stochastic mean-payoff games, 
both players have optimal pure stationary strategies (Liggett and Lipman \cite{ll}),
and this result provides the corresponding result for the transient game we consider, 
as it can be transformed into a stochastic mean-payoff game by
adding an extra action with reward one that loops at every absorbing state in which player one is the winner, 
while all other actions have null rewards.
Furthermore, in this framework, 
a finite algorithm always exists simply because the number of strategies composed by pure stationary actions is finite. 
Since the number of these strategies is exponential in the number 
$\mathbf{a}$ of the states of the game corresponding to of the players  
(if each of these states has two possible actions, there are $2^\mathbf{a}$ strategies), 
this finite search algorithm is exponential.
For more details about SSGs see \cite{c1} and \cite{c2}.


Hoffman and Karp \cite{hk} provide a strategy iteration algorithm to solve non-terminating stochastic games.
Strategy iteration algorithms are analyzed by several authors. 
Tripathi et al. \cite{tvk} provide a bound of $O(2^s/s)$ for its complexity in the case of SSGs. 
Another approach consists in the reduction of the SSG to linear programming type problems. 
In this direction Halman \cite{halman} provides the bound $e^{O(\sqrt{\mathbf{s} \log \mathbf{s}})}$ for the expected complexity, 
where $\mathbf{s}$ is the number of states of the game,
based on Matou\v{s}ek et al. \cite{matousek}.
Condon \cite{c1} proves that solving a general SSG belongs to a certain complexity class 
(more precisely NP$\,\cap\,$co-NP,  the details in \cite{c1}). 
Condon \cite{c2} also analyzes several other algorithms for SSGs, 
including modifications of the Hoffman-Karp algorithm, 
explains why certain naive algorithms do not provide the correct answer, 
and establishes a quadratic programming algorithm to solve SSGs. 
Within other more recent proposals, we can mention one algorithm based on permutations
of the random nodes of a SSG (Gimbert and Horn \cite{gh}),  
and another one by Ibsen-Jensen and Miltersen \cite{im}, 
that combining value iteration and backward-induction (retrograde analysis) 
slightly improves the time complexity of the previous one. 
Another approach is to analyze games with few cycles, as it is known that 
games without cycles can be solved in polynomial time (Auger et al. \cite{acs}).
Nevertheless, the existence of polynomial time
algorithms for SSGs is still a challenging open question.


In the present paper we propose a new algorithm to find the value and an optimal strategy for the GPG.
This algorithm differs from the usual iterative (in value or in policy) ones and finds the solution directly by a backward-induction.
The number of operations performed by our algorithm is polynomial in the target score $N$ of the game
and also in the number $\mathbf{s}$ of states of the game.
(A more detailed analysis of complexity would require to consider rational probabilities.)
It should be noticed that the GPG we analyze, considered as a SSG,
has a large proportion of random nodes, 
and also a large amount of cycles,
so the general algorithms proposed when this quantities are small are not of use.
In this way the algorithm we propose answers the complexity question for SSG
for our particular class of stochastic games.


The rest of the paper is as follows. 
In section \ref{section:competitive} we formulate the mathematical model of the game and the corresponding Bellman equations. 
In section \ref{section:main} we present our main result: the algorithm and its complexity analysis.
In section \ref{section:examples} we show some numerical results for the pig and piglet games.
A brief conclusion is the content of section \ref{section:conclusion}.

\section{The mathematical model}\label{section:competitive}
 
In this section we formulate the elements of the competitive Markov decision process we consider to model the GPG
and write the corresponding Bellman equations.
Following \cite{Filar} we define: 
the state space; 
the set of available actions each player has at a given state; 
and, given the pair (state, action), 
the reward of the players and the probability distribution for the next state.

\subsection{States}

The states of the game at a given moment is described by the accumulated score of each player
and the turn score of the player who is rolling the die.
Instead of considering the accumulated score of each player, we consider the remaining points to win, 
which contain the same information and are more convenient for our analysis. 
Based on these facts we consider states of the form $(a,b,\tau,j)$, 
where $a$ and $b$ 
($0< a,b\leq N$) are the respective amounts of points that player one and player two need to win the game, 
$\tau$ is the turn score of player $j$, 
while $j=1,2$ indicates which player is rolling the die. 
If $j=1$ then $0\leq\tau\leq a+n-1$, while if $j=2$ we have $0\leq \tau\leq b+n-1$. 
It is also necessary to consider a final absorbing state \texttt{GO}\ (game over).
As the game is symmetric, we usually consider states with $j=1$, 
and omit this information when it is not strictly necessary, i.e. we identify $(a,b,\tau)=(a,b,\tau,1)$.
Observe finally that, for large $N$, we have $\mathbf{s}=O(N^3)$, where $\mathbf{s}$ denotes the number of states of the game. 
 
\subsection{Actions}\label{subsection:4.3}

As we mentioned, we analyze the states with $j=1$, in which player two has no action to choose.
When a turn begins, i.e. at state $(a,b,0)$, player one has only one possible action: to \emph{roll} the die. 
Within the turn, i.e. when $0<\tau<a$ he can \emph{roll} or \emph{hold}. 
In states with $\tau>a$ the only possible action is to \emph{hold}. 
In the special state \texttt{GO}\ there is no action to choose.

\subsection{Probability transitions}\label{subsection:prob}

We define the probability distribution for the next state depending on the present state and action taken by the player. 
Again we describe just player one's states.
From a state $(a,b,\tau)$, with $0<\tau<a$, if the player decides to hold,
the game moves to state $(a-\tau,b,0,2)$ with probability one. 
From a state $(a,b,\tau)$, with $0\leq \tau<a$ if the player rolls, 
the following state will be $(a,b,0,2)$ with probability $p_0$, and $(a,b,\tau+i,1)$ with probability $p_i$ for $i=1\ldots n$. 
From a state $(a,b,\tau)$, with $\tau>a$, and also from the state \texttt{GO}, the following state will be \texttt{GO}\ with probability one.

\subsection{Payoffs}

As player one aims to maximize his winning probability, we define the payoff such that the sum of all his payoffs during the game is one if he wins and zero otherwise. 
To this end we define the payoff to be one at states $(a,b,\tau)$, with $\tau>a$, 
when player one decides to hold (only available action). 
In any other case the payoff is zero. 
Since we are considering a zero-sum game, 
we do not need to define the payoff for player two. 
Observe that, with the defined payoff, the expectation of the sum (over all steps of the game) of all the payoffs, 
coincides with the winning probability of player one.

\subsection{Sub-games}\label{subgames}

A relevant feature of the GPG is that,
once a player obtained certain points in his general account it, is not possible to lose them. 
From the side of the competitive Markov decision process, 
this particularity of the game implies that once the process entered in a state $(a,b,\tau,j)$, 
in the subsequent states it will not visit states $(a',b',\tau',j')$ with $a'>a$ or $b'>b$. 
This allow us to define for each $a$ and $b$ a sub-game, 
with space state restricted to $\{(a',b',\tau',j')\colon a'\leq a,\ b'\leq b\}$.

\subsection{Game value and Bellman equations}

Based on general results for competitive Markov decision process we formulate the optimization problem
and write the corresponding Bellman equations that give the value and an optimal strategy of the game.
For general details and basic definitions see Chapter 4 in \cite{Filar}. 
As was discussed above, 
the transient game has a value and a stationary deterministic optimal strategy within the class of behavioral strategies.
The value of the game is then
$$
v=\operatorname{\bf P}(\text{player one wins from state $(N,N,0)$}),
$$
where $\operatorname{\bf P}$ stands for probability, and both players use their optimal strategies in the class of behavioral strategies.
With the same convention, we define values at intermediate states, by
$$
v(a,b,\tau,j)=\operatorname{\bf P}(\text{player one wins from state $(a,b,{\tau},j)$}),
$$
omitting $j$ when $j=1$, and also omitting $\tau$ when $\tau=0$. With this shorter notation $v=v(N,N)$.
Due to the transience of the game (i.e. it ends with probability one) 
and to its symmetry, we have
\begin{multline*}
\operatorname{\bf P}(\text{player one wins from state $(a,b,0,2)$})\\
=1-\operatorname{\bf P}(\text{player two wins from state $(a,b,0,2)$})\\ 
=1-\operatorname{\bf P}(\text{player one wins from state $(b,a,0,1)$}),
\end{multline*}
that in terms of the values of the game means
\begin{equation*}
v(a,b,0,2)=1-v(b,a,0,1)=1-v(b,a).
\end{equation*}

Using the notation
\begin{equation}
v_{roll}(a,b,\tau)=p_0\left(1-v(b,a)\right)+\sum_{i=1}^n p_i v(a,b,\tau+i),\label{eq:taun}
\end{equation}
the Bellman equations of the game can be written in a compact form, only for $j=1$, as
\begin{equation}
v(a,b,\tau)=
\begin{cases}
v_{roll}(a,b,0),&\quad \tau =0,\\
\max\Big\{1-v(b,a-\tau), v_{roll}(a,b,\tau)\Big\},&\quad 0<\tau<a,\\
1,&\quad \tau\geq a.
\label{tt:bellman}
\end{cases}
\end{equation} 

Observe that if we restrict a solution of the Bellman equations of the game to the states considered in one of the sub-games defined in Section \ref{subgames}, we obtain a solution of the Bellman equations of the sub-game. This fact shows that the solution of a sub-game makes part of the solution of the game. 

\section{Main results}\label{section:main}
\begin{theorem}\label{theorem:1}
There exists a finite algorithm that gives the value and an optimal strategy of the generalized pig game with target $N$.
This algorithm requires $O(N^3\log N)$ steps, which in terms of the number $\mathbf{s}$ of states of the game is 
$O(\mathbf{s}\log\mathbf{s})$.
\end{theorem}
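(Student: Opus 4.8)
The plan is to solve the Bellman equations \eqref{tt:bellman} directly by backward induction on the sub-games, exploiting the observation at the end of Section \ref{subgames} that the solution of a sub-game is a restriction of the full solution. I would order the pairs $(a,b)$ so that whenever I solve the sub-game indexed by $(a,b)$, all values $v(a',b')$ with $a'\le a$, $b'\le b$ and $(a',b')\ne(a,b)$ are already known; a natural order is by increasing $a+b$, or lexicographically. The key point is that within a fixed sub-game $(a,b)$ the only genuinely unknown quantities coupling back into the recursion are the two ``entry'' values $v(a,b)=v(a,b,0)$ and $v(b,a)=v(b,a,0)$, because every term $v(a,b,\tau+i)$ with $\tau+i\ge a$ equals $1$, and every occurrence of a smaller first argument, such as $1-v(b,a-\tau)$ in the hold branch, refers to an already-solved sub-game.

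Concretely, I would fix $(a,b)$ and treat $x=v(a,b)$ and $y=v(b,a)$ as the two unknowns. Writing out \eqref{eq:taun} and \eqref{tt:bellman} for the states $(a,b,\tau)$ with $\tau=a-1,a-2,\dots,1,0$ in decreasing order of $\tau$, each $v(a,b,\tau)$ becomes an explicit expression in $x$, $y$, the already-known sub-game values, and the $v(a,b,\tau+i)$ computed at higher $\tau$. The backward pass collapses the whole turn into a single relation of the form $x=F_{a,b}(x,y)$, and by symmetry $y=F_{b,a}(y,x)$; this is the promised decoupled \emph{system of two non-linear equations in two unknowns}. I would solve this $2\times 2$ system and then substitute back to recover every intermediate $v(a,b,\tau)$ and the optimal action (roll versus hold) at each state by comparing the two branches in the middle case of \eqref{tt:bellman}.

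For the complexity count I would argue as follows. There are $O(N^2)$ sub-games, indexed by pairs $(a,b)$. For each one, forming the two functions $F_{a,b}$ and $F_{b,a}$ requires a backward sweep over the $O(N)$ turn-score states, each costing $O(n)=O(1)$ work, so $O(N)$ per sub-game. The only step that is not clearly linear is solving the nonlinear $2\times2$ system: here I would need to show that each such system can be solved to the required precision in $O(\log N)$ steps, presumably because the nonlinearity is mild (the max in \eqref{tt:bellman} partitions the turn into a roll-region and a hold-region separated by a single threshold, so the unknown reduces to locating that integer threshold by binary search over the $O(N)$ possible values). Multiplying, $O(N^2)\cdot O(N\log N)=O(N^3\log N)$, and since $\mathbf{s}=O(N^3)$ gives $\log\mathbf{s}=O(\log N)$, this is $O(\mathbf{s}\log\mathbf{s})$.

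The main obstacle I anticipate is the solvability and cost of the $2\times2$ nonlinear system: I must verify that the threshold structure really does make $F_{a,b}$ piecewise-explicit with a monotone dependence on the threshold, so that binary search is valid and converges in $O(\log N)$; I also need existence and uniqueness of the solution for each sub-game (guaranteed in principle by the theory of transient competitive MDPs cited earlier, but here it must be pinned to the concrete decoupled equations). Establishing that the optimal strategy within a turn is of threshold type, and that this threshold can be found without iterating the Bellman operator, is the crux of why the algorithm is non-iterative and is where I would spend most of the effort.
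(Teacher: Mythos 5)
Your overall architecture coincides with the paper's: backward induction over sub-games ordered so that all values $v(a',b')$ with smaller indices are already known, reduction of each sub-game to a decoupled system of two nonlinear equations in the two entry values $x=v(a,b)$ and $y=v(b,a)$, and back-substitution to recover the intermediate $v(a,b,\tau)$ and the optimal actions from the max in \eqref{tt:bellman}. One simplification you miss: the dependence on $x$ in your relation $x=F_{a,b}(x,y)$ is spurious, since within a turn the only way the process re-enters the current sub-game's unknown states is by rolling a $0$, which passes the die at value $1-y$; hence each $v(a,b,\tau)$ is a function of $y$ alone and the system is $x=f_{a,b}(y)$, $y=f_{b,a}(x)$ with univariate right-hand sides (this is the content of Proposition \ref{proposition}).

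The genuine gap is at the crux you yourself flag: the cost and solvability of the $2\times 2$ system. Your plan is binary search over a single roll/hold threshold in $\tau$, but you never establish that the optimal in-turn policy is of threshold type for the \emph{competitive} game (it is for the solitaire turn, but here the hold payoff $1-v(b,a-\tau)$ varies with $\tau$ through previously computed values, so a single crossing is not obvious); moreover the two turns are coupled, since the threshold in the states $(a,b,\cdot)$ depends on $y$, which depends on the threshold in $(b,a,\cdot)$, so even granting threshold structure a one-dimensional binary search with a monotone feasibility test is not justified. Existence and uniqueness of the fixed point are also left to general MDP theory rather than pinned to the concrete equations. The paper needs none of this: it proves that $f_{a,b}$ is convex, non-increasing and piecewise linear, explicitly the upper envelope of at most $a$ affine functions of $y$, so it is computed by half-plane intersection in $O(a\log a)$ steps and the system is solved as the intersection of two monotone polygonal curves in $O((a+b)\log(a+b))$ steps, citing \cite{cgaa}; uniqueness follows from convexity, monotonicity and the derivative bound $f_{a,b}'(0)=-\operatorname{\bf P}(\Theta_0<\Theta_a)>-1$, which makes the composition $f_{a,b}\circ f_{b,a}$ a contraction. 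Until you either prove the threshold structure together with a valid monotone test for the joint search, or replace that step by an argument of this kind, the $O(N\log N)$ bound per sub-game, and hence the theorem, is not established.
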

The proof of the Theorem consists in the presentation of the algorithm and the posterior analysis
of its complexity. 
Our algorithm is based in the following facts:
\begin{itemize}
\item The states of the sub-game for $a=\alpha$ and $b=\beta$ can be decomposed: 
for $\alpha=1$, into the states of the smaller sub-game for $a=1$ and $b=\beta -1$, plus the states of the form $(1,\beta,\tau,j)$; 
and for $\alpha>1$,  into the states of the smaller sub-game for  $a=\alpha-1$ and $b=\beta$ plus the states of the form $(\alpha,\beta,\tau,j)$. This allows to implement a backward algorithm.
\item
To find the solution of the sub-game for $a,b$, assuming that is already known the value of the smaller sub-game described in the previous item, one should find the value of the game for the remainder states, i.e. the ones of the form $(a,b,\tau,j)$. Taking into account the reduction to states of player one, this means that the unknown values are that of states of the form $(a,b,\tau)$ and $(b,a,\tau)$.
\end{itemize}
These remarks are consequences of the form of the game and the possible transitions of the underlying Markov process that arises once the actions are taken,
and provide a decoupling of the large optimization problem necessary to solve the game,
into a series of smaller problems that are solved sequentially.

From these  observations we conclude that we can solve the equations recursively backwards, beginning
by $a=b=1$, afterwards fixing $b$ and
solving for $a=1,\dots,b$, from $b=1$ to $b=N$. 
This remark also implies that the winning probabilities from a given state, do not depend on the target $N$ of the game,
 but only on $a$ and $b$, the respective points that players one and two need to win the game.
This discussion leads to the general Algorithm \ref{algo}
\subsection{Pseudo-code for solving the game}\label{pseudo}
\begin{algorithm}
\begin{algorithmic}[1] 
\For{\texttt{$b$ from $1$ to $N$}}
	\For{\texttt{$a$ from $1$ to $b$}}
        \State \texttt{Find $v(a,b,\tau)\colon 0\leq \tau <a$ and $v(b,a,\tau)\colon 0\leq \tau <b$}
      \EndFor
\EndFor
\caption{General backward algorithm.}\label{algo}
\end{algorithmic}
\end{algorithm}
Regarding the complexity, as $\mathbf{s}=O(N^3)$, we must verify that the required number of steps is 
$O(N^3\log N)$.
Here we observe that if $c(a,b)$ 
is the the complexity of step 3 (fixed $a,b$),  and if $c(a,b)\leq c(N,N)$,
then the whole algorithm complexity is at most $O(N^2 c(N,N))$. 
\subsection{Solving step  3 for fixed $a,b$}
In order to implement the above algorithm it is necessary to solve step 3  
for fixed $a$ and $b$. 
From \eqref{tt:bellman}, the corresponding Bellman equations for $v(a,b,\tau)$ with $\tau=a-1,a-2,\dots,0,$ are
\begin{align}
 v(a,b,a-i)&=\max\Big\{1-v(b,i), v_{roll}(a,b,a-1)
 \Big\},\quad (i=1,\dots,a-1)\label{eq:a-1}\\
 v(a,b)&=v_{roll}(a,b,0). \label{eq:1}
 \end{align}
 while for $v(b,a,\tau)$, with $\tau=b-1,b-2,\dots,0$, are:
\begin{align}
 v(b,a,b-j)&=\max\Big\{1-v(a,j), v_{roll}(b,a,b-1)
 \Big\},\quad (j=1,\dots,b-1)
 \label{eq:2}\\
 v(b,a)&=v_{roll}(b,a,0).
 \label{last2}
 \end{align}
We assume, in accordance with the Algorithm \ref{algo}, 
that in the previous steps we have already found 
$v(b,1),\dots,v(b,a-1)$ and $v(a,1),\dots,v(a,b-1)$. 
The following result analyzes equations \eqref{eq:a-1}-\eqref{eq:1}, to show that
$v(a,b,a-i)$ depends only on already known values and on $v(b,a)$. 
Similarly, from \eqref{eq:2}-\eqref{last2}, we have that $v(b,a,b-i)$ depends on already known values and on $v(a,b)$. 
For the following result we introduce the Markov chain $\{X_t\colon t=1,2,\dots\}$ of the scores corresponding to one turn of a player who always roll, 
and starts from 
$X_0=0$. 
Denote by $\Theta_z$ the first hitting time of level $z$. 
In particular $\Theta_0$ is the absorption time.  
\begin{proposition}\label{proposition} 
Assuming $v(b,1),\dots,v(b,a-1);v(a,1),\dots,v(a,b-1)$ as known values, 
then $v(a,b,a-i)\ (i=1,\dots,a-1)$ depend only on $v(b,a)$, 
and $v(b,a,b-i)\ (i=1,\dots,b-1)$ depend only on $v(a,b)$. 
Furthermore, the function $f_{a,b,i}\colon[0,1]\to[0,1],\ (i\leq a)$ 
such that $v(a,b,a-i)=f_{a,b,i}(v(b,a))$, satisfies:
\begin{enumerate}
\item[\rm(a)] For $i\leq 0$, we have $f_{a,b,i}(y)=1$ for all $y\in [0,1]$.
\item[\rm(b)] For $i=0,\ldots,a$, function $f_{a,b,i}$ is piecewise linear, convex (so continuous) and non-increasing. 
It satisfies $f_{a,b,i}(0)=1$, $f_{a,b,i}(1)>0$ and it has up to $i$ points of non-differentiability, 
of which $i-1$ are points of non-differentiability of some $f_{a,b,j}$ for $j< i$.
\item
[\rm(c)]  Denoting $f_{a,b}=f_{a,b,0}$, we have $f_{a,b}'(0)=-\operatorname{\bf P}(\Theta_0<\Theta_a)>-1$.
\item
[\rm(d)] $f_{a,b}$ is strictly decreasing.
\end{enumerate} 
\end{proposition}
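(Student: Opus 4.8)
The plan is to build the functions $f_{a,b,i}$ by a backward recursion in the turn score $\tau=a-i$ (equivalently, forward in $i=1,\dots,a$), proving the dependence claim together with (a) and (b) by strong induction on $i$; properties (c) and (d), which concern only the turn-start function $f_{a,b}=f_{a,b,a}$ (the case $\tau=0$, i.e.\ $v(a,b)=v(a,b,0)$), then follow from two separate arguments.

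I would start from the boundary convention $v(a,b,\tau)=1$ for $\tau\ge a$, which is exactly (a) and anchors the induction. Substituting it into \eqref{eq:taun} and using (a) to rewrite the terms of non-positive index, I obtain
\begin{equation*}
v_{roll}(a,b,a-i)=p_0(1-y)+\sum_{k=1}^{n}p_k\,f_{a,b,i-k}(y),\qquad y:=v(b,a),
\end{equation*}
a convex combination (weights $p_0,\dots,p_n$) of the affine map $p_0(1-y)$ and the lower-index functions $f_{a,b,i-k}$. This already shows that $v(a,b,a-i)$, via \eqref{eq:a-1} for $i<a$ and \eqref{eq:1} for $i=a$, depends only on $y=v(b,a)$ and on the known constant $1-v(b,i)$; the symmetric statement for the $v(b,a,b-i)$ follows by exchanging the two players. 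The qualitative part of (b) is then immediate: the displayed combination is piecewise linear, convex and non-increasing, and $f_{a,b,i}=\max\{1-v(b,i),v_{roll}(a,b,a-i)\}$ preserves all three, while the top level $f_{a,b}$ (no maximum in \eqref{eq:1}) inherits them directly. For the endpoint values, at $y=0$ each $f_{a,b,i-k}(0)=1$ forces the combination to equal $1$, whence $f_{a,b,i}(0)=1$; and at $y=1$ the fact that $p_0<1$ puts positive weight on some $k\ge1$ with $f_{a,b,i-k}(1)>0$ (by induction), giving $f_{a,b,i}(1)>0$.

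The delicate part of (b) is the count of non-differentiability points, and this is where I expect the main difficulty: a naive union bound over all $j<i$ would be far too large. Instead I would carry through the induction the stronger statement that there is a common breakpoint set $S_i$, with $|S_i|\le i$, on whose complementary intervals every $f_{a,b,j}$ with $j\le i$ is affine. Since each $f_{a,b,j}$ ($j<i$) is affine off $S_{i-1}$, so is the convex combination $g_i:=v_{roll}(a,b,a-i)$; and because $g_i$ is convex and non-increasing with $g_i(0)=1\ge 1-v(b,i)$, the maximum defining $f_{a,b,i}$ equals $g_i$ on an initial interval $[0,y_i^\ast]$ and equals the constant $1-v(b,i)$ beyond, introducing at most the single new breakpoint $y_i^\ast$. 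Setting $S_i=S_{i-1}\cup\{y_i^\ast\}$ gives $|S_i|\le|S_{i-1}|+1\le i$ and places the kinks of $f_{a,b,i}$ inside $S_{i-1}\cup\{y_i^\ast\}$; hence at most one kink is new and the remaining (at most $i-1$) are kinks of earlier $f_{a,b,j}$, exactly as claimed.

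For (c) I would avoid differentiating the recursion and argue probabilistically that ``always roll'' is optimal near $y=0$. At $y=0$ every in-turn value equals $1$, whereas holding at $(a,b,\tau)$ (possible only for $0<\tau<a$) yields $1-v(b,a-\tau)<1$, the known value $v(b,a-\tau)$ being strictly positive since the player to move wins with positive probability; thus rolling strictly dominates holding at every choice state at $y=0$, and by continuity of the finitely many comparisons it remains optimal on some interval $[0,\ep)$. There $f_{a,b}$ coincides with the always-roll value $1-\operatorname{\bf P}(\Theta_0<\Theta_a)\,y$, so $f_{a,b}'(0)=-\operatorname{\bf P}(\Theta_0<\Theta_a)$, which is $>-1$ because reaching level $a$ before absorption has positive probability, i.e.\ $\operatorname{\bf P}(\Theta_0<\Theta_a)<1$. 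Finally, for (d), since $f_{a,b}=v_{roll}(a,b,0)$ carries no maximum, on each linear piece its slope equals $-p_0+\sum_k p_k f_{a,b,a-k}'\le -p_0<0$ (each $f_{a,b,a-k}$ is non-increasing and $p_0>0$ by transience), so $f_{a,b}$ is strictly decreasing throughout.
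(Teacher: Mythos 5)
Your proof is correct, and the inductive core (building $f_{a,b,i}$ as an alternation of convex combinations via $v_{roll}$ and maxima with the known constants $1-v(b,i)$) is the same as the paper's. Where you genuinely diverge is in part (c): the paper differentiates the Bellman equations at $y=0$ and observes that the numbers $-v(a,b,\tau)'(0)$ satisfy the same recurrence and initial condition as the hitting probabilities $\operatorname{\bf P}(\Theta_0<\Theta_{a-\tau})$, whereas you identify the optimal policy near $y=0$ (rolling strictly dominates holding at every choice state because $f_{a,b,i}(0)=1$ while $1-v(b,a-\tau)<1$, and the finitely many strict comparisons persist on some $[0,\ep)$) and then evaluate the always-roll policy directly to get $f_{a,b}(y)=1-\operatorname{\bf P}(\Theta_0<\Theta_a)\,y$ on that interval. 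The two arguments encode the same recurrence, but yours is more self-contained and also explains \emph{why} the hitting probability appears. You additionally supply material the paper leaves implicit or omits entirely: the convexity claim in (b); a clean accounting of the breakpoints via the common set $S_i$ with the observation that $\max\{c,g_i\}$ for $g_i$ non-increasing with $g_i(0)\ge c$ creates at most one new kink (the paper only lists the candidate kink locations without bounding their number); an explicit proof of (d) from the slope bound $-p_0+\sum_k p_k f_{a,b,a-k}'\le -p_0<0$, which the paper never gives; and the correct reading $f_{a,b}=f_{a,b,a}$ (the paper's ``$f_{a,b}=f_{a,b,0}$'' is a typo, since $f_{a,b,0}\equiv 1$ under the stated indexing). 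The only caveats are the standing non-degeneracy assumptions $0<p_0<1$, which you invoke (for $v(b,a-\tau)>0$, for $\operatorname{\bf P}(\Theta_0<\Theta_a)<1$, and for the strict slope bound) and which the paper also uses tacitly.
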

\begin{proof} 
The proof of \rm(a) is by induction on $i$.
First observe that \eqref{eq:a-1} can be rewritten as $v(a,b,a-1)=\max\{1-v(b,1),1-p_0 v(b,a)\}$. Then we have 
$$f_{a,b,1}(y)=\max\{1-v(b,1),1-p_0 y\}.$$ 
As $v(b,1)$ is known from a previous step of the algorithm, we have as a function of $y$ is piecewise linear, continuous, non-increasing, with up to one point of non-differentiability, and satisfies $f_{a,b,1}(0)=1$ and $f_{a,b,1}(1)> 0$. 
Assuming the statement is valid for $j=1,\ldots, i-1$, where $a>i>1$, one just need to observe that $v(a,b,a-i)$ is the maximum between $1-v(b,i)$, which is constant in $[0,1]$, and 
$$v_{roll}(a,b,a-i)(y)=p_0(1-y)+p_1 f_{a,b,i-1}(y)+\ldots + p_n f_{a,b,i-n}(y),$$
which is a linear combination of piecewise linear, continuous functions, that satisfy $v_{roll}(a,b,a-i)(0)=1$. 
This shows that $f_{a,b,i}$ is also piecewise linear and continuous, and its possible points of non-differentiability are the ones from 
$f_{a,b,i-j}$, $j=1,\ldots, n$ plus the point $y$ in which $v_{roll}(a,b,a-i)(y)=1-v(b,i)$.
In order to verify (b) we differentiate at the point $y=0$ the equations in \eqref{tt:bellman}, when $\tau<a$, 
applying the chain rule. 
When $y=v(b,a)=0$ the maximum in \eqref{tt:bellman} is the second expression,
and the derivative is: 
\begin{align*}
 v(a,b,\tau)'(0)&=-p_0+\sum_{i=1}^n p_iv(a,b,\tau+i)'(0).
 \end{align*}
 In these equations, whenever 
 $\tau+i\geq a$ 
 we have 
 $v(a,b,\tau+i)'(0)=0$. 
 On the other hand,
 due to the Markov property, for $a\geq 2$ we have that
 \begin{multline*}
 \operatorname{\bf P}(\Theta_0<\Theta_{a-\tau})=
 p_0
 +
 \sum_{i=1}^n p_i
 \operatorname{\bf P}\left(\Theta_0<\Theta_{a-(\tau+i)}\right),\quad
 (\tau=0,\dots,a-1).
 \end{multline*}
We conclude that the sequence of values $\{-v(a,b,\tau)'(0)\colon\tau=0,\dots,a-1\}$ 
and the sequence of probabilities
 $\left\{\operatorname{\bf P}\left(\Theta_0<\Theta_{\tau}\right)\colon\tau=0,\dots,{a-1}\right\}$ 
 satisfy the same recurrence
 relations, with the same initial condition 
 $-v(a,b,a-1)'(0)=\operatorname{\bf P}(\Theta_0<\Theta_1)=p_0$.
 This proves (c) concluding the proof of the proposition.
\end{proof}
 
The previous result shows that, to solve solve step  3, it is enough to know the functions $f_{a,b,i}$, with $i=1,\ldots,a$ and $f_{b,a,i}$, with $i=1,\ldots,b$ and to find $x=v(a,b)$ and $y=v(b,a)$ that solve the system of two equations with two unknowns: 
\begin{equation}\label{eq:s}
\begin{cases}
x=f_{a,b}(y),\\
y=f_{b,a}(x),\\
\end{cases}
\end{equation}
In order to solve step 3 we have Algorithm \ref{algo3}.
\begin{algorithm}
\begin{algorithmic}[1] 
\For{\texttt{$i$ from $1$ to $a$}}
	\State{\texttt{Find the points defining $f_{a,b,i}$}}
\EndFor
	\For{\texttt{$i$ from $1$ to $b$}}
	\State{\texttt{Find the points defining $f_{b,a,i}$}}
      \EndFor
        \State \texttt{Find $x$ and $y$ that solve system \eqref{eq:s}}      
\For{\texttt{$i$ from $1$ to $a-1$}}
	\State{\texttt{compute v(a,b,a-i)}}
\EndFor
\For{\texttt{$i$ from $1$ to $b-1$}}
	\State{\texttt{compute v(b,a,b-i)}}
\EndFor
\end{algorithmic}
\caption{Solving step 3 for fixed $a$, $b$.}\label{algo3}
\end{algorithm}

\begin{proposition}\label{complexity}
To solve Algorithm \ref{algo3} 
we use $c(a,b)\leq O(N\log N)$ steps.
\end{proposition}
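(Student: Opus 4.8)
The plan is to add up the costs of the five blocks of Algorithm~\ref{algo3}, checking that each stays within $O(N\log N)$, and using throughout that $a,b\le N$ and that the number of faces $n$ is a fixed constant.

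The two construction loops (lines 1--2 and 3--4) produce the families $\{f_{a,b,i}\}_{i\le a}$ and $\{f_{b,a,i}\}_{i\le b}$, and here I would lean entirely on Proposition~\ref{proposition}(b): each $f_{a,b,i}$ is convex, non-increasing and piecewise linear with at most $i$ breakpoints, of which only one is \emph{new}, namely the abscissa $y_i$ at which $v_{roll}(a,b,a-i)$ crosses the level $1-v(b,i)$, the other $i-1$ being inherited from earlier steps. First I would store the breakpoints in a single balanced search tree that grows by one element per step, so the total storage is $O(a)$ rather than the $O(a^2)$ that explicit tabulation of every $f_{a,b,i}$ would require. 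The per-step task is then to locate $y_i$: since $v_{roll}(a,b,a-i)$ is convex and strictly decreasing from the value $1$ at $y=0$, while $1-v(b,i)$ is a constant already known from a previous sub-game, the crossing is unique and is found by a binary search over the current breakpoints, each probe evaluating $p_0(1-y)+\sum_{j=1}^n p_j f_{a,b,i-j}(y)$ at one abscissa. Storing the functions so that one evaluation costs $O(\log i)$, a binary search uses $O(\log i)$ probes, so a step costs $O(\log^2 i)$ and the loop $O(a\log^2 a)=O(N\log^2 N)$; the same holds for $\{f_{b,a,i}\}$. Reaching the sharper $O(N\log N)$ of the statement requires removing this extra logarithmic factor, to which I return below.

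For line 5 I would reduce system~\eqref{eq:s} to the scalar equation $y=f_{b,a}(f_{a,b}(y))$. As $f_{a,b}$ and $f_{b,a}$ are non-increasing, the composition $h=f_{b,a}\circ f_{a,b}$ is non-decreasing, and Proposition~\ref{proposition}(c)--(d) (that is, $f_{a,b}'(0)>-1$ together with strict monotonicity) is precisely what guarantees a unique fixed point and makes $h(y)-y$ strictly decreasing. The map $h$ is piecewise linear with only $O(a+b)=O(N)$ breakpoints---one for each breakpoint of $f_{a,b}$, and one for each $y$ whose image $f_{a,b}(y)$ is a breakpoint of $f_{b,a}$---so I would binary-search these, using the monotonicity of $h(y)-y$, to isolate the single linear piece containing the fixed point and then solve one linear equation exactly, at cost $O(\log N)$ probes of $O(\log N)$ each, hence $O(\log^2 N)$. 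Lines 6--9 are pure point evaluations, $v(a,b,a-i)=f_{a,b,i}(y)$ and $v(b,a,b-i)=f_{b,a,i}(x)$ for all $i$, i.e.\ $O(a+b)=O(N)$ evaluations, which a single monotone sweep of the sorted breakpoints carries out in $O(N\log N)$. Adding these, the total is dominated by the construction loops; granting the amortized per-step bound argued next, one gets $c(a,b)=O(N\log N)$.

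I expect the genuine obstacle to lie in that first block: establishing that the family $\{f_{a,b,i}\}$ can be maintained, and each new breakpoint $y_i$ located, in amortized $O(\log N)$ time per step. This is exactly where the nesting of breakpoints and the convexity supplied by Proposition~\ref{proposition}(b) must be used in full: a naive recomputation of $v_{roll}(a,b,a-i)$ as an explicit piecewise-linear function at every $i$ would cost $\Theta(i)$ per step and give only the weaker $O(N^2)$ bound, while the black-box binary search above gives $O(\log^2 N)$ per step. Shaving this down to $O(\log N)$---for instance by carrying, along each segment of the sorted partition, the running slopes of the combination so that the crossing with the constant level is resolved by one monotone search rather than by repeated independent evaluations---is the crux of the argument.
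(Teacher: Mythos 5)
Your accounting of the five blocks of Algorithm \ref{algo3} is the right skeleton, and your treatment of line 5 and of lines 6--9 is sound, but the proposal does not actually prove the stated bound: for the two construction loops you obtain $O(N\log^2 N)$, and you say yourself that removing the extra logarithm ``is the crux of the argument''. That crux is exactly what the paper's proof supplies, and it is resolved there by a different, non-incremental idea: since $f_{a,b}$ is convex, piecewise linear, with $O(a)$ breakpoints (Proposition \ref{proposition}(b)), it is the upper envelope of $O(a)$ lines, i.e.\ the boundary of an intersection of $O(a)$ half-planes, and the entire chain of points defining it is computed wholesale in $O(a\log a)$ steps by the standard half-plane-intersection algorithm (Corollary 4.4 of \cite{cgaa}), the supporting lines being read off from the Bellman recursion \eqref{eq:a-1}--\eqref{eq:1}. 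The paper never maintains the family $\{f_{a,b,i}\}$ one breakpoint at a time. Your incremental scheme, besides losing a logarithmic factor, also leaves unjustified the claim that each $f_{a,b,i-j}$ can be evaluated at a point in $O(\log i)$ time from a structure of total size $O(a)$: evaluating $f_{a,b,k}(y)$ through the recursion branches into $n$ subcalls per level, while storing each function's linear pieces explicitly costs $\Theta(k)$ per step --- exactly the trap you identify but do not escape. As written, the argument yields at best $c(a,b)=O(N\log^2 N)$, which would degrade Theorem \ref{theorem:1} to $O(N^3\log^2 N)$.

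Where you genuinely diverge and still succeed is line 5. The paper finds $(x,y)$ by intersecting the two polygonal chains $x=f_{a,b}(y)$ and $y=f_{b,a}(x)$ in $O((a+b)\log(a+b))$ steps (Corollary 2.7 of \cite{cgaa}), and this is the dominant term in its count; your reduction to the fixed point of $h=f_{b,a}\circ f_{a,b}$ --- using convexity to propagate $f_{a,b}'(0)>-1$ from Proposition \ref{proposition}(c) to all slopes, so that $0\le h'<1$ and $h(y)-y$ is strictly decreasing --- is correct and even cheaper ($O(\log^2 N)$ once the sorted chains are available). But that saving is moot so long as the construction block is not brought down to $O(N\log N)$.
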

\begin{proof} 
Step 3 in the algorithm may be decomposed into three sub-steps: 
(i) the computation of the functions $f_{a,b}$ and $f_{b,a}$; 
(ii) the computation of $x=v(a,b)$ and $y=v(b,a)$ as the solution of the system \eqref{eq:s}; 
and 
(iii) the computation of $v(a,b,i)$ for $i=1,\ldots,a-1$ and $v(b,a,i)$ for $i=1,\ldots,b-1$. 
We analyze the complexity of each of these steps.

Regarding (i), as follows from Bellman equations \eqref{eq:a-1} to \eqref{eq:1}, the function $f_{a,b}$
is the maximum of $a$ linear functions, so its determination is equivalent to the determination of the
common intersection of $a$ half planes. This requires $O(a\log a)$ steps, as follows from Corollary 4.4 in \cite{cgaa}.
We obtain in particular the chain of points that determine this region.

In what respects (ii), based on assertions (a) and (b) in Proposition \ref{proposition}, 
we know that the solution of the non-linear system above is unique, 
being the intersection of two polygonal curves (see Figure \ref{fafb}).
The number of steps necessary to find the intersection of two polygonal lines, each one given as a chain of points
is $O((a+b)\log(a+b))$ if $a$ and $b$ are the respective number of points of the lines (see Corollary 2.7, page 40
in \cite{cgaa}). 

Regarding step (iii), knowning $v(b,a)$, the determination of each value $v(a,b,i)$ demands a substitution,
beginning by $v(a,b,a-1)$, requiring then $O(a)$ steps.

The most demanding step is (ii), that requires a number of steps bounded by $O(N\log N)$,
concluding the proof of Proposition \ref{complexity}. 
\end{proof}
\begin{figure}[h]
\centering
\includegraphics[width=7cm]{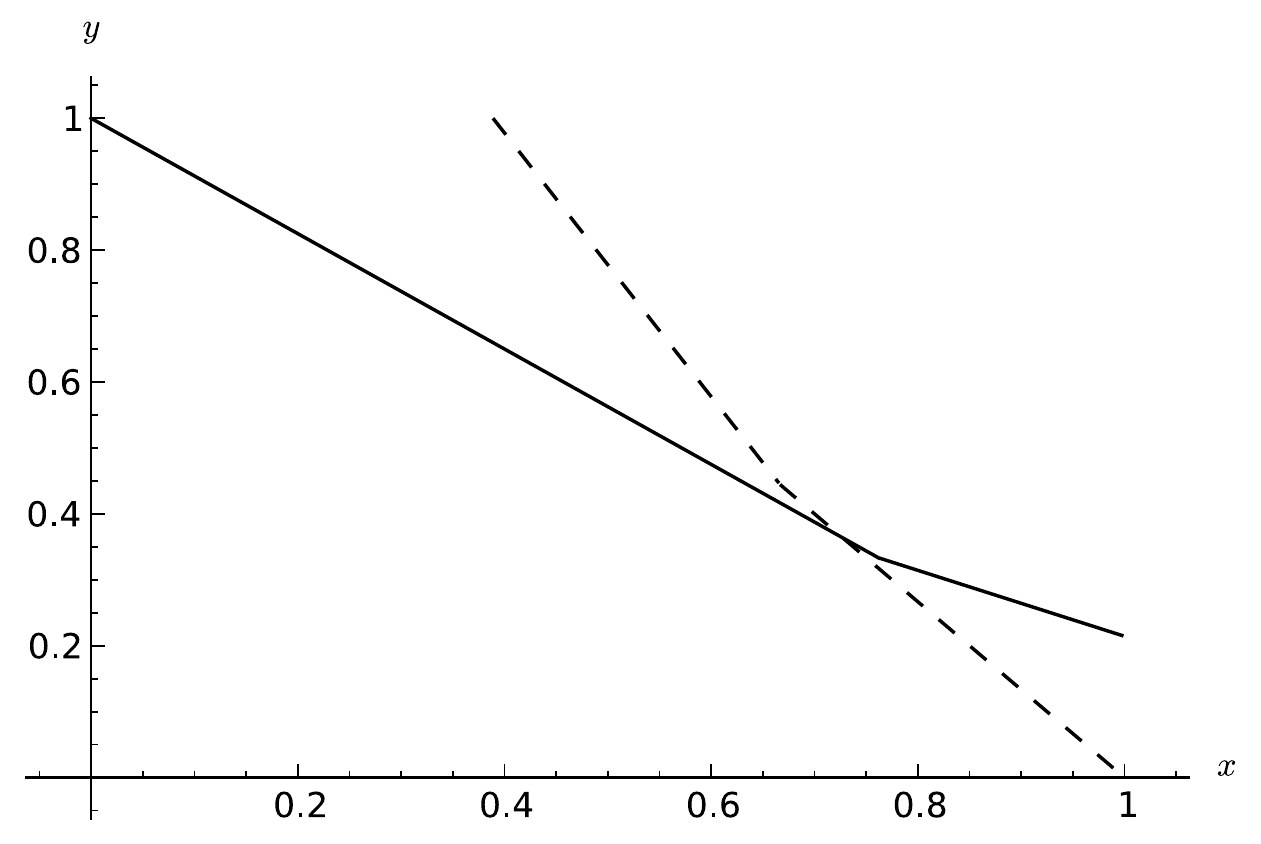}
\caption{Function $y=f_{b,a}(x)$ (solid line) intersects $x=f_{a,b}(y)$ (dashed line) at the solution $x=v(a,b)$, $y=v(b,a)$ in one instance of the Piglet game.}
\label{fafb}
\end{figure}
\begin{proof}[Proof of Theorem \ref{theorem:1}]
We finally observe that, once $v(a,b)$ and $v(b,a)$ are obtained, an optimal strategy is computed checking 
where the maximum is attained at each equation:
if the maximum is
$v_{roll}$ player one has to roll, otherwise he has to hold.
This concludes the proof of the Theorem.
\end{proof}

\begin{remark}
A practical alternative to perform step 3 is to proceed by iteration, 
finding the fixed point of the equation
$$
x=f_{a,b}\left(f_{b,a}(x)\right).
$$
Considering $g(x)=f_a(f_b(x))$, it can be seen, as a consequence of (a) and (c) in Proposition \ref{proposition}, 
that $-1<g'(x)<0$, ensuring that the fixed point of $g$ can be found iteratively.
The values of $x$ and $y$ replaced in equations 
\eqref{eq:a-1} to \eqref{last2}, 
complete the solution of the step 3 of our algorithm, 
and give the value of the game.
\end{remark}
\begin{remark}
The solution of the nonlinear system can also be found
as the solution of the Linear Programming problem:
$$
\left\{
\begin{array}{lll}
\min& x+y,&\\
\text{subject to:}& x\geq f_{a,b}(y),& y\geq f_{b,a}(x).\\
\end{array}
\right.
$$
as can be seen from Figure \ref{fafb}, and results from Proposition \ref{proposition}.
\end{remark}

\section{Examples}\label{section:examples}
\subsection{The pig game}
In Table \ref{table:3} we present the values of the Pig game for different target values. 
\begin{table}
\begin{center}
\caption{Pig Game with different targets}
\begin{threeparttable}
\begin{tabular}{cc}
\hline\noalign{\smallskip}
Target of the game ($N$)& value of the game $v(N,N)$ \\
\hline\noalign{\smallskip}
10       &   0.70942388        \\
50 &      0.54615051     \\
100 &   0.53059207\tnote{1}        \\
200 & 0.52152913\\
500&     0.51362019      \\
1000&    0.50963900       \\ 
\hline\noalign{\smallskip}
\end{tabular}
\begin{tablenotes} 
\item[1] Obtained by Neller and Presser \cite{Neller}
\end{tablenotes}
\label{table:3}
\end{threeparttable}
\end{center}
\end{table} 
Observe that the player who starts rolling has some advantage over his opponent, 
being this advantage less significant as the target increases.
For all target values the value of the game is larger than $0.5$ (as the game is symmetric)
and decreases to $0.5$. 

\subsection{The piglet game}
It is instructive to analyze the  Piglet game as
the function $f_{a,b}$ of Proposition \ref{proposition} can be written explicitly, 
As the algorithm in subsection \ref{pseudo} solves the problem, 
below we explain how to solve step 3.

\subsubsection{The piglet game for fixed $a\leq b$.}
The unknowns are denoted by 
$x_{\tau}=v(a,b,\tau)$ for $\tau=0,\dots,a-1$ 
and 
$y_{\sigma}=v(b,a,\sigma)$ for $\sigma=0,\dots,b-1$.  
The knowns are denoted by 
$\bar{x}_{\tau}=1-v(b,a-\tau,0)$ and $\bar{y}_{\sigma}=1-v(a,b-\sigma,0)$ for the same range in $\tau$ and $\sigma$.
In case $a=b$, as $v(a,b,\tau)=v(b,a,\tau)$ we have $a$ knowns and $a$ unknowns. 
The Bellman equations are
\begin{align*}
x_0&={1-y_0+x_1\over 2}, 					&y_0&={1-x_0+y_1\over 2},\\
x_{1}&=\max\left\{\bar{x}_{1},{1-y_0+x_{2}\over 2}\right\}, &y_{1}&=\max\left\{\bar{y}_{1},{1-x_0+y_{2}\over 2}\right\},\\
x_{2}&=\max\left\{\bar{x}_{2},{1-y_0+x_{3}\over 2}\right\},&y_{2}&=\max\left\{\bar{y}_{2},{1-x_0+y_{3}\over 2}\right\},\\
&\ \vdots &&\vdots\\
x_{a-2}&=\max\left\{\bar{x}_{a-2},{1-y_0+x_{a-1}\over 2}\right\}, &y_{b-2}&=\max\left\{\bar{y}_{b-2},{1-x_0+y_{b-1}\over 2}\right\},\\
x_{a-1}&=\max\left\{\bar{x}_{a-1},{1-y_0+1\over 2}\right\}, &y_{b-1}&=\max\left\{\bar{y}_{b-1},{1-x_0+1\over 2}\right\},
\end{align*}
that, after recursive substitution,
where we denoted $\bar{x}_{a}=\bar{y}_{b}=1$,
give the following nonlinear system for $x_0,y_0$:
\begin{align*}
x_0=&\max_{1\leq \tau\leq a}\left\{\frac1{2^\tau}\left[(2^{\tau}-1)(1-y_0)+\bar{x}_{\tau}\right]\right\},\\
y_0=&\max_{1\leq \sigma\leq b}\left\{\frac1{2^\sigma}\left[(2^{\sigma}-1)(1-x_0)+\bar{y}_{\sigma}\right]\right\}.
\end{align*}
In these equations, 
the properties (a), (b) and (c) of Proposition \ref{proposition} are verified directly.
These equations allow to find the exact solution of the game 
(see Table \ref{table:piglet}).
\begin{table}
\begin{center}
\caption{\label{table:piglet} Values of $v(a,b)$ for the piglet game with $N=3$.}
\begin{tabular}{ccc|c}
$3$&$2$&$1$ & $b\  /  a$\\
\hline\noalign{\smallskip}

\hline
${ 2/9}$&${ 2/5}$&${ 2/3}$&\quad 1\\

${4/11}$&${ 4/7}$&${ 4/5}$&\quad 2\\

$6/11$&${ 8/11}$&${ 8/9}$&\quad 3\\

\end{tabular}
\end{center}
\end{table}

\section{Conclusions}\label{section:conclusion}
In this paper we present an exact algorithm that solves a generalized version of the Pig dice game. 
The fact that the value is found exactly is an important advantage in comparison with other algorithms, which use value iteration or policy iteration.
The algorithm is polynomial time, more precisely requires $O(\mathbf{s}\log\mathbf{s})$ steps (where $\mathbf{s}$ is the number of states of the game). 
It provides an answer in this particular
class of games to the question of the existence of polynomial time algorithms to solve 
Simple stochastic games.


\end{document}